\newtheorem{thm}{Theorem}[section]
\newtheorem*{thm*}{Theorem}
\newtheorem{prop}[thm]{Proposition}
\newtheorem{lem}[thm]{Lemma}
\newtheorem{cor}[thm]{Corollary}
\newtheorem{defn}[thm]{Definition}
\newtheorem{rem}[thm]{Remark}
\newenvironment{hproof}{%
  \proof}{\endproof}
\DeclareMathOperator\Le{L}
\DeclareMathOperator\spec{spec}
\DeclareMathOperator\Hi{\mathcal H}
\DeclareMathOperator\B{\mathcal B}
\DeclareMathOperator\cen{c}
\DeclareMathOperator\C{\mathbb{C}}
\DeclareMathOperator\R{\mathbb{R}}
\DeclareMathOperator\s{\mathbb{S}}
\DeclareMathOperator\id{\mathbf{1}}
\title[Geometry of spectral bounds of curves of unitary operators]{Geometry of spectral bounds of curves of unitary operators}
\date{\today}
\author{Martin Miglioli}
\email[Martin Miglioli]{martin.miglioli@gmail.com}
\address[Martin Miglioli]{Instituto Argentino de Matem\'atica-CONICET. Saavedra 15, Piso 3, (1083) Buenos Aires, Argentina}
\thanks{The author was supported by IAM-CONICET, grants PIP 2010-0757 (CONICET) and PICT 2010-2478 (ANPCyT)}
\begin{document}
\begin{abstract}
This article presents a new proof of a theorem concerning bounds of the spectrum of the product of unitary operators and a generalization for differentiable curves of this theorem.  The proofs involve metric geometric arguments in the group of unitary operators and the sphere where these operators act.  \\

\medskip

\noindent \textbf{Keywords.} unitary groups, spectrum, path metric space, Finsler metric. 
\end{abstract}

\maketitle
%\tableofcontents

%\section*{COMMENTS - TO BE DELETED FOR FINAL DRAFT}

%format:
%\begin{itemize}
%\end{itemize}

%content:
%\begin{itemize}
%\end{itemize}

\section{Introduction}
In this article we present metric proofs of facts about the spectrum of curves of unitary operators. In particular, if $\theta_+(u)$ denotes the maximum argument of the spectrum of a unitary operator $u$, then $\theta_+(uv)\leq\theta_+(u)+\theta_+(v)$ for unitary operators $u$ and $v$ such that $-1\notin\spec(u)$, $-1\notin\spec(v)$ and $\theta_+(u)+\theta_+(v)<\pi$. An analogous statement holds for the minimum argument of unitary operators. This result was proved previously with different techniques in \cite{nudelman,kato,yaf,aw,chau0,chau,childs}, see the paragraph after Theorem (1) in \cite{chau} for a discussion of the previous literature. The bound on the spectrum also follows from Thompson's theorem \cite{thompson} which uses the solution to Horn's problem. When the inequalities are equalities, there is a relation between the eigenspaces of the eigenvalues with maximum and minimum arguments of the the unitaries and its product, see \cite{chau}. 

In the present paper we show them as consequences of the triangle inequality in the group of unitary operators and in the unit sphere where the unitaries act. The proofs apply directly to the infinite dimensional context without any approximation argument. We also prove a generalization of these results for ``infinite products of unitaries'', that is, for piecewise $C^1$ curves. In Section \ref{prel} we review the results used in this article, in Section \ref{maintheoremtwo} we present the geometric proof of the results mentioned above, and in Section \ref{maintheoremcurve} we present the generalization for piecewise $C^1$ curves. We prove the special case first since there is interest in the literature for alternative proofs of this theorem, also because it helps to understand the statement and the proof of the generalization.     

\section{Preliminaries}\label{prel}

In this section we recall results about the metric geometry of spaces of unitaries endowed with the bi-invariant Finsler metric derived from the uniform norm. See Section 2 and Section 4 of \cite{larotonda} for a study of Banach and Frechet Lie groups endowed with a bi-invariant Finlser metric. 

Let $\Hi$ be a separable Hilbert space and let $U$ be the group of unitaries. On the algebra of bounded operators $\B(\Hi)$ the operator norm of $x\in\B(\Hi)$ is $\|x\|=\sup_{\xi\in \Hi}\frac{\|x\xi\|}{\|\xi\|}$. This norm is invariant by conjugation by unitaries, so by right or left translation we can define a norm on the tangent spaces at all points of $U$. With this Finsler structure we define a metric on $U$. Let $\Le$ denote the length functional for piecewise $C^1$ curves $\alpha:[a,b]\to U$ measured with the $\|\cdot\|$ norm
$$\Le(\alpha)=\int_{a}^{b}\|\dot{\alpha}_t\|dt=\int_{a}^{b}\|\alpha_t^{-1}\dot{\alpha}_t\|dt=\int_{a}^{b}\|\dot{\alpha}_t\alpha_t^{-1}\|dt,$$
where $\alpha_t^{-1}\dot{\alpha}_t$ and $\dot{\alpha}_t\alpha_t^{-1}$ are the left and right logarithmic derivatives of $\alpha$, and the derivative of a curve $\alpha$ is denoted with $\dot{\alpha}$. The rectifiable distance between $u$ and $v$ in $U$ is given by $$d(u,v)= \inf\{\Le(\gamma):\gamma \subseteq U \mbox{ joins } u \mbox{ and }v\}.$$ 
This metric is invariant by left and right translation, that is, $d(uv,uw)=d(v,w)$ and $d(vu,wu)=d(v,w)$ for $u,v,w\in U$. 

We next recall Proposition 5.2. of \cite{esteban} and its proof for the convenience of the reader. This result was proved in  \cite{portarecht,atkin1,atkin2}, in a much more general setting it was proved in \cite[Theorem 4.11]{larotonda}. Let $\s=\{\xi\in\Hi:\|\xi\|=1\}$ stand for the unit sphere in $\Hi$ endowed with its canonical Hilbert-Riemann metric and distance function $d_{\s}$. Let $\B(\Hi)_h$ be the hermitian operators in $\B(\Hi)$ and note that the Lie algebra of $U$ is the space $i\B(\Hi)_h$ of skew hermitian operators. For $\xi\in\Hi$ we define the smooth map
$$\rho_\xi:U\to \s,\qquad\rho_\xi(u)=u\xi.$$

\begin{lem}\label{decreaselength}
Let $\xi\in\Hi$ and let $\gamma:[0,1]\to U$ be a piecewise $C^1$ curve joining $v$ and $w$, then $\Le(\rho_\xi(\gamma))\leq \Le(\gamma)$, where $\Le$ denotes the length functional of curves. If $v,w\in U$ then 
$$d_{\s}(v\xi,w\xi)\leq d(v,w).$$  
\end{lem}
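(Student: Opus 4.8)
The key estimate is the pointwise inequality $\|\dot{(\rho_\xi(\gamma))}_t\| \le \|\dot\gamma_t\|$, from which the length comparison and then the distance comparison follow immediately. The plan is as follows.

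\textbf{Step 1: Differentiate $\rho_\xi$.} Write $\eta_t = \rho_\xi(\gamma_t) = \gamma_t \xi$, a curve in $\s$. Differentiating, $\dot\eta_t = \dot\gamma_t \xi = (\dot\gamma_t \gamma_t^{-1}) \gamma_t \xi = (\dot\gamma_t \gamma_t^{-1}) \eta_t$. Set $z_t = \dot\gamma_t \gamma_t^{-1} \in i\B(\Hi)_h$, the right logarithmic derivative; then $\|\dot\eta_t\| = \|z_t \eta_t\| \le \|z_t\| \, \|\eta_t\| = \|z_t\| = \|\dot\gamma_t\|$, using that $\|\eta_t\| = 1$ and that the length functional on $U$ is computed with $\|\dot\gamma_t \gamma_t^{-1}\|$. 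One subtlety: the norm on the left-hand side is the Hilbert space norm on the tangent space $T_{\eta_t}\s$ coming from the Hilbert-Riemann metric, which is just the restriction of the ambient norm $\|\cdot\|_{\Hi}$, so the comparison is legitimate.

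\textbf{Step 2: Integrate to compare lengths.} Integrating the pointwise bound over $[0,1]$ gives $\Le(\rho_\xi(\gamma)) = \int_0^1 \|\dot\eta_t\|\, dt \le \int_0^1 \|\dot\gamma_t\|\, dt = \Le(\gamma)$, which is the first assertion. Piecewise $C^1$ regularity of $\rho_\xi(\gamma)$ is inherited from that of $\gamma$ since $\rho_\xi$ is smooth, so the integrals make sense.

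\textbf{Step 3: Pass to the infimum.} For $v,w\in U$, let $\gamma$ range over piecewise $C^1$ curves in $U$ joining $v$ and $w$. Then $\rho_\xi(\gamma)$ is a piecewise $C^1$ curve in $\s$ joining $v\xi$ and $w\xi$, so $d_{\s}(v\xi, w\xi) \le \Le(\rho_\xi(\gamma)) \le \Le(\gamma)$. Taking the infimum over all such $\gamma$ yields $d_{\s}(v\xi, w\xi) \le d(v,w)$.

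\textbf{Main obstacle.} The only delicate point is Step 1: identifying the tangent vector $\dot\eta_t$ correctly and recognizing that its Riemannian norm in $\s$ agrees with the ambient operator-to-vector norm, so that the operator-norm bound $\|z_t\eta_t\|\le\|z_t\|$ applies. Everything else is integration and taking infima.
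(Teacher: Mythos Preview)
Your proof is correct and follows essentially the same route as the paper: both establish the pointwise bound $\|\dot\gamma_t\xi\|\le\|\dot\gamma_t\|$ (you phrase it via the right logarithmic derivative, the paper via the differential of $\rho_\xi$), then integrate and take the infimum.
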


\begin{proof}
The differential of the map $\rho_\xi$ at an $u\in U$ is 
$$d(\rho_\xi)_u:iu\B(\Hi)_h\to T_{u\xi}\s,\qquad d(\rho_\xi)_u(iux)=iux\xi,$$
for $x\in\B(\Hi)_h$. Since $\|d(\rho_\xi)_u(iux)\|=\|iux\xi\|\leq\|ux\|$ this map is norm decreasing. Hence, if $\gamma$ is a piecewise smooth curve in $U$ then $\Le(\rho_\xi(\gamma))\leq \Le(\gamma)$. By taking the infimum over curves in $U$ joining $v$ and $w$ we get $d_{\s}(v\xi,w\xi)\leq d(v,w)$. 
\end{proof}

\begin{prop}\label{minimalgeod}
Let $u\in U$ and $x\in\B(\Hi)_{h}$ with $\|x\|\leq\pi$. Then the smooth curve $\mu(t)= ue^{itx}$ has minimal length along its path, for all $t\in[-1,1]$. Any pair of unitaries $u,v\in U$ can be joined by such a curve.
\end{prop}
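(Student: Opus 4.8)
The plan is to show first that the curve $\mu(t)=ue^{itx}$ cannot be shortened, using Lemma \ref{decreaselength} to push the problem down to the sphere $\s$, where the geodesic distance is explicitly computable. By left-invariance of $d$ we may assume $u=\id$, so $\mu(t)=e^{itx}$; the length of $\mu$ restricted to $[0,s]$ is $\Le(\mu|_{[0,s]})=\int_0^s\|ix\|\,dt=s\|x\|\leq\pi$. To prove minimality it suffices to exhibit, for each such $s$, a unit vector $\xi$ for which $d_\s(\xi,e^{isx}\xi)$ equals $s\|x\|$; then Lemma \ref{decreaselength} gives $d(\id,e^{isx})\geq d_\s(\xi,e^{isx}\xi)=s\|x\|=\Le(\mu|_{[0,s]})\geq d(\id,e^{isx})$, forcing equality. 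The natural candidate is a unit vector $\xi$ that is ``almost an eigenvector'' for $x$ with eigenvalue of modulus close to $\|x\|$: since $\|x\|$ lies in the spectrum of the hermitian operator $x$, either $\|x\|$ or $-\|x\|$ is an approximate eigenvalue, so one picks unit vectors $\xi_n$ with $\|x\xi_n \mp \|x\|\xi_n\|\to 0$.

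The second step is the computation of $d_\s$ along such a vector. On the unit sphere of a Hilbert space the Riemannian distance between $\eta$ and $\zeta$ is the angle $\arccos(\operatorname{Re}\langle\eta,\zeta\rangle)$ when this angle is at most $\pi$; more usefully, for the one-parameter family $t\mapsto e^{it\lambda}\xi$ with $\xi$ a genuine eigenvector $x\xi=\lambda\xi$, the orbit is a great circle traversed at speed $|\lambda|$, so the distance from $\xi$ to $e^{is\lambda}\xi$ is exactly $\min(s|\lambda|,\,2\pi-s|\lambda|)$, which equals $s|\lambda|$ as long as $s|\lambda|\leq\pi$. Taking $|\lambda|=\|x\|$ (or the approximate version, passing to the limit $n\to\infty$ and using continuity of $d_\s$) yields $d_\s(\xi,e^{isx}\xi)\geq s\|x\|-\varepsilon$ for every $\varepsilon>0$, hence the needed lower bound $d(\id,e^{isx})\geq s\|x\|$. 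Combined with the upper bound from the length of $\mu$ itself, this shows $\mu$ is minimal on every subinterval containing $0$, and by the translation invariance of the metric and reparametrization it is minimal along its whole path for $t\in[-1,1]$.

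For the last assertion, that any $u,v\in U$ are joined by such a curve, write $w=u^{-1}v\in U$ and use the (Borel/continuous) functional calculus: since $w$ is unitary, $w=e^{iy}$ for a hermitian $y$ with spectrum in $[-\pi,\pi]$, hence $\|y\|\leq\pi$, and then $t\mapsto u e^{ity}$ is a curve of the required form with value $u$ at $t=0$ and $v$ at $t=1$. The main obstacle I anticipate is the rigorous handling of the case where $\|x\|$ is only an approximate eigenvalue and not attained — one must either work throughout with approximate eigenvectors and take limits at the end, or invoke the lower semicontinuity of the length functional together with continuity of $e^{isx}$ in $s$; either way, care is needed to ensure the great-circle distance estimate survives the limit. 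A secondary subtlety is checking that the great-circle computation on $\s$ is valid in the complex Hilbert space setting, i.e. that the real part of the inner product is the right quantity and that the orbit of an eigenvector really is a length-minimizing arc up to total angle $\pi$.
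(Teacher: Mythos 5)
Your proposal is correct and follows essentially the same route as the paper: push the curve to the sphere via $\rho_\xi$, use the length-decreasing property of Lemma \ref{decreaselength}, and observe that the orbit of a (norming) eigenvector is a minimizing great-circle arc of the same speed, plus functional calculus for the last assertion. The only difference is in handling the case where $\|x\|$ is not an attained eigenvalue: the paper approximates $x$ by operators possessing a norming eigenvector, while you keep $x$ fixed and use approximate eigenvectors $\xi_n$, passing to the limit in $d_{\s}$ — a legitimate and slightly more explicit way to carry out the limiting step the paper only sketches.
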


\begin{hproof}
Suppose that $x$ has a norming eigenvector, that is, a vector $\xi\in\s$ such that $x\xi=\lambda \xi$ with $\lambda=\pm \|x\|$. Consider the curve in $\mu(t)=ue^{itx}$ in $U$. We have 
$$\rho_\xi(\mu(t))=ue^{itx}\xi=e^{it\lambda}u\xi,$$
and since $\lambda\leq\pi$ this curve is minimal in $\s$, hence $\Le(\rho_\xi(\mu))=d_{\s}(u\xi,ue^{ix}\xi)$. Also $\frac{d}{dt}(\rho_\xi(\mu(t))=i\lambda e^{it\lambda}u\xi$, therefore 
$$\left\lVert\frac{d}{dt}(\rho_\xi(\mu(t))\right\rVert=\vert \lambda\vert=\|x\|=\left\lVert\frac{d}{dt}\mu(t)\right\rVert,$$
and we conclude that $\Le(\mu)=\Le(\rho_\xi(\mu))$. By Lemma \ref{decreaselength} we have $d_{\s}(u\xi,ue^{ix}\xi)\leq d(u,ue^{ix})\leq \Le(\gamma)$ for all $\gamma$ in $U$ joining $u$ and $ue^{ix}$. The general case follows by approximating an arbitrary $x\in\B(\Hi)_h$ with operators that have a norming eigenvector. Any unitary operator can be written in the form $e^{ix}$ for some self adjoint operator $x$ with $\|x\|\leq\pi$. If $u,v\in U$, there exists such $x$ satisfying $u^{-1}v = e^{ix}$. Then $\mu(t)=ue^{itx}$ is a minimal curve in $U$ joining $\mu(0)=u$ and $\mu(1)=ue^{ix}=v$.
\end{hproof}

The exponential map is injective on $B_\pi=\{x\in i\B(\Hi)_h:\|x\|<\pi\}$, that is, the injectivity radius of the exponential map in $U$ is $\pi$. We define $V_\pi=\exp(B_\pi)$. The next result is \cite[Theorem 4.22]{larotonda} stated in the particular context of the group $U$, see Section 5.1.1 of the same article. We denote with $\id$ the identity operator in $\Hi$.

\begin{thm}\label{normingfunc}
Let $z\in  i\B(\Hi)_h$ with $\|z\|<\pi$, let $\gamma:[a,b]\to U$ be a piecewise $C^1$ curve joining $\id$ and $e^z$ in $U$. The following are equivalent
\begin{itemize}
\item $\gamma$ is a short curve in $U$, that is, $\Le(\gamma)=d(\id,e^z)=\|z\|$.
\item $\gamma=e^\Gamma \subseteq V_\pi$ and for any norming functional $\psi$ of $z$, $\psi(\dot{\Gamma}_t)=\|\gamma_t^{-1}\dot{\gamma}_t\|$ for all $t\in [a,b]$.
\item $\gamma=e^\Gamma$ for $\Gamma \subseteq B_\pi$ and there exists a unit norm functional $\psi$ such that $\psi(\Gamma_t)=\|\Gamma_t\|$ and $\psi(\gamma_t^{-1}\dot{\gamma}_t)=\|\gamma_t^{-1}\dot{\gamma}_t\|$ for all $t\in [a,b]$. Thus $z/{\|z\|}$ and $\gamma^{-1}\dot{\gamma}/{\|\gamma^{-1}\dot{\gamma}\|}$ sit inside a face of the unit sphere of $i\B(\Hi)_h$ endowed with the norm $\|\cdot\|$.
\end{itemize}
\end{thm}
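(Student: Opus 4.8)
\begin{hproof}
Write $(1),(2),(3)$ for the three conditions in the order listed; the plan is to prove the cycle $(2)\Rightarrow(3)\Rightarrow(1)\Rightarrow(2)$. We use repeatedly that $d(\id,e^w)=\|w\|$ with $t\mapsto e^{tw}$ short whenever $\|w\|\leq\pi$ (Proposition \ref{minimalgeod}), that $\exp$ restricts to a diffeomorphism $B_\pi\to V_\pi$ with $\{u\in U:d(\id,u)<\pi\}\subseteq V_\pi$, and that for a piecewise $C^1$ curve $\gamma=e^\Gamma$ in $V_\pi$ one has $\gamma_t^{-1}\dot\gamma_t=\int_0^1 e^{-s\Gamma_t}\dot\Gamma_t e^{s\Gamma_t}\,ds$, so $\psi(\gamma_t^{-1}\dot\gamma_t)=\int_0^1\psi(e^{-s\Gamma_t}\dot\Gamma_t e^{s\Gamma_t})\,ds$ for every functional $\psi$. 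The crucial tool is: \emph{if $\psi\in(i\B(\Hi)_h)^*$ has norm one and $\psi(y)=\|y\|$, then $\psi(e^{sy}we^{-sy})=\psi(w)$ for all $w\in i\B(\Hi)_h$ and all $s\in\R$.} Indeed, for fixed $w$ the function $s\mapsto\psi(e^{sw}ye^{-sw})$ has modulus $\leq\|y\|$ and attains $\|y\|$ at $s=0$, so its derivative there, which is $\psi([w,y])$, vanishes; as $w$ is arbitrary $\psi([y,\cdot])\equiv0$, whence $\frac{d}{ds}\psi(e^{sy}we^{-sy})=\psi\big([y,e^{sy}we^{-sy}]\big)=0$. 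Consequently, if $\psi$ is a unit functional with $\psi(\Gamma_t)=\|\Gamma_t\|$, the integral formula gives $\psi(\gamma_t^{-1}\dot\gamma_t)=\psi(\dot\Gamma_t)$.

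$(2)\Rightarrow(3)$. Fix a norming functional $\psi$ of $z$ (Hahn--Banach). From $\Gamma_a=0$ we get $\psi(\Gamma_t)=\int_a^t\psi(\dot\Gamma_r)\,dr=\int_a^t\|\gamma_r^{-1}\dot\gamma_r\|\,dr=\Le(\gamma|_{[a,t]})\geq d(\id,\gamma_t)=\|\Gamma_t\|$, and $\psi(\Gamma_t)\leq\|\Gamma_t\|$, so $\psi(\Gamma_t)=\|\Gamma_t\|$ for all $t$. By the consequence above, $\psi(\gamma_t^{-1}\dot\gamma_t)=\psi(\dot\Gamma_t)=\|\gamma_t^{-1}\dot\gamma_t\|$; together with $\Gamma\subseteq B_\pi$ this is $(3)$. $(3)\Rightarrow(1)$. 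Here $\psi(\Gamma_t)=\|\Gamma_t\|$, so $\psi(\dot\Gamma_t)=\psi(\gamma_t^{-1}\dot\gamma_t)=\|\gamma_t^{-1}\dot\gamma_t\|$ and hence $\Le(\gamma)=\int_a^b\|\gamma_t^{-1}\dot\gamma_t\|\,dt=\int_a^b\psi(\dot\Gamma_t)\,dt=\psi(\Gamma_b)=\|\Gamma_b\|=\|z\|$ (note $\Gamma_b=z$); since $\Le(\gamma)\geq d(\id,e^z)=\|z\|$ always, $\gamma$ is short.

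$(1)\Rightarrow(2)$. A sub-curve of a short curve is short, so $d(\id,\gamma_t)\leq\Le(\gamma|_{[a,t]})\leq\|z\|<\pi$ and $\gamma_t\in V_\pi$; writing $\gamma=e^\Gamma$ with $\Gamma\subseteq B_\pi$ we get $\Le(\gamma|_{[a,t]})=\|\Gamma_t\|$ and $\Le(\gamma|_{[t,b]})=d(\gamma_t,e^z)=\|w_t\|$, where $e^{w_t}=\gamma_t^{-1}e^z\in V_\pi$, $w_t\in B_\pi$; additivity of length gives $e^z=e^{\Gamma_t}e^{w_t}$ with $\|z\|=\|\Gamma_t\|+\|w_t\|$. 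Let $\psi$ be any norming functional of $z$. The crux is the claim $\psi(\Gamma_t)=\|\Gamma_t\|$ for every $t$. Granting it, the consequence above gives $\psi(\dot\Gamma_t)=\psi(\gamma_t^{-1}\dot\gamma_t)\leq\|\gamma_t^{-1}\dot\gamma_t\|$ pointwise, and since $\int_a^b\psi(\dot\Gamma_t)\,dt=\psi(z)=\|z\|=\Le(\gamma)=\int_a^b\|\gamma_t^{-1}\dot\gamma_t\|\,dt$, the nonnegative piecewise-continuous function $\|\gamma_t^{-1}\dot\gamma_t\|-\psi(\dot\Gamma_t)$ vanishes a.e.\ and therefore everywhere, which is $(2)$.

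The main obstacle is the claim just invoked: that a norming functional of the endpoint $z$ norms $\Gamma_t$ all along the short curve. I would establish it through the sphere map of Lemma \ref{decreaselength}. A norming functional $\psi$ of $z$ is detected by unit vectors $\xi_n$ that are approximate norming eigenvectors of $z$ (so $(z\mp i\|z\|)\xi_n\to0$, the sign according to which spectral edge carries $\|z\|$), with $\psi$ arising as the weak-$*$ limit of the associated vector states. Applying Lemma \ref{decreaselength} to the short sub-curve $\gamma|_{[a,t]}$, the image $\rho_{\xi_n}(\gamma|_{[a,t]})$ joins $\xi_n$ to $\gamma_t\xi_n$ and has length $\leq\Le(\gamma|_{[a,t]})=\|\Gamma_t\|$; since $d_{\s}(\xi_n,e^z\xi_n)\to\|z\|=d(\id,e^z)$ and length is subadditive along $\gamma$, it must in the limit be a minimal great-circle arc of length $\|\Gamma_t\|$, so $\xi_n$ is also an approximate norming eigenvector of $\gamma_t$, hence of $\Gamma_t$, and passing to the limit $\psi(\Gamma_t)=\|\Gamma_t\|$. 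The delicate points are making these estimates uniform in $t$ and treating the singular case, where $z$ (or some $\Gamma_t$) has no honest norming eigenvector and $\psi$ is a singular state concentrated at the edge of the spectrum; an alternative is to argue intrinsically with the facial structure of the unit ball of $i\B(\Hi)_h$, deducing from $\|z\|=\|\Gamma_t\|+\|w_t\|$ that $z/\|z\|$, $\Gamma_t/\|\Gamma_t\|$ and $w_t/\|w_t\|$ lie in a common face.
\end{hproof}
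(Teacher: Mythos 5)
Note first that the paper does not prove this theorem at all: it is imported verbatim from \cite[Theorem 4.22]{larotonda}, and the only in-text commentary is the Remark identifying the Gauss-type lemma $\psi(e^{-y}\exp_{*y}x)=\psi(x)$ as the key ingredient. So there is no internal proof to compare against; you have attempted something the paper deliberately outsources. That said, your ``crucial tool'' (a unit functional norming $y$ kills $[y,\cdot]$, hence is invariant under $\mathrm{Ad}(e^{sy})$, hence $\psi(\gamma_t^{-1}\dot\gamma_t)=\psi(\dot\Gamma_t)$ via the integral formula for the logarithmic derivative) is exactly the mechanism the Remark points to, and your derivation of it is correct and clean. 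The implications $(2)\Rightarrow(3)$ and $(3)\Rightarrow(1)$ are complete modulo routine facts ($\exp$ is a diffeomorphism from $B_\pi$ onto $V_\pi$, $\Gamma_a=0$, $\Gamma_b=z$ by injectivity, and $d(\id,e^w)=\|w\|$ from Proposition \ref{minimalgeod}).

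The genuine gap is in $(1)\Rightarrow(2)$, at precisely the point you flag: the claim that \emph{every} norming functional $\psi$ of $z$ satisfies $\psi(\Gamma_t)=\|\Gamma_t\|$ along a short curve. Your first route assumes that $\psi$ is a weak-$*$ limit of vector states $\langle-i(\cdot)\xi_n,\xi_n\rangle$ with $\xi_n$ approximate norming eigenvectors of $z$; that is false as stated. Already in finite dimensions, if the extremal eigenvalue of $-iz$ has a two-dimensional eigenspace, the mixed state $\tfrac12(\omega_{\xi_1}+\omega_{\xi_2})$ norms $z$ but is not a limit of single vector states; and when $\max\spec(-iz)=-\min\spec(-iz)$ the norming functionals also include differences of states attached to the two spectral edges. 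Your alternative route is not immediate either: from $e^z=e^{\Gamma_t}e^{w_t}$ with $\|z\|=\|\Gamma_t\|+\|w_t\|$ one cannot read off a common face, because $z\neq\Gamma_t+w_t$ in general, so the elementary Banach-space fact about $\|a+b\|=\|a\|+\|b\|$ does not apply. The repair is within reach of your own tools, though: the set of norming functionals of $z$ is a weak-$*$ compact convex face of the dual ball, and the condition $\psi(\Gamma_t)=\|\Gamma_t\|$ (for fixed $t$) cuts out a weak-$*$ closed convex subset; by Krein--Milman it therefore suffices to verify the claim on extreme points, which are ($\pm$) pure states concentrated at a spectral edge of $-iz$, and these \emph{are} weak-$*$ limits of vector states of approximate norming eigenvectors (Glimm's lemma handling the singular ones). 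For such vector states your sphere argument via Lemma \ref{decreaselength} does work pointwise in $t$ --- for fixed $t$ one gets $d_{\s}(\xi_n,\gamma_t\xi_n)\to\|\Gamma_t\|$ and near-equality in the spherical triangle inequality forces $\gamma_t\xi_n-e^{\pm i\|\Gamma_t\|}\xi_n\to0$ --- so the ``uniformity in $t$'' you worry about is not actually needed. As written, however, the claim that carries the hardest implication is asserted, not proved.
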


\begin{rem}
The proof of Theorem \ref{normingfunc} uses a generalization in the case of groups endowed with a conjugation invariant Finsler metric of the Gauss' Lemma of Riemannian geometry: the differential of the exponential map along a geodesic preserves angles with the geodesic speed vector, see \cite[Lemma 4.5]{larotonda}. This lemma asserts that for $x,y\in i\B(Hi)_h$ and $\psi$ a unit norm norming functional for $y$, that is, $\psi(y)=\|y\|$ and $\|\psi\|=1$, the following equality holds
$$\psi(e^{-y}\exp_{*y}x)=\psi(x),$$
where $\exp_{*y}$ denotes the differential of the exponential map $\exp$ at $y\in i\B(Hi)_h$.
\end{rem}

\section{Spectral bound for the product of unitaries}\label{maintheoremtwo} 
In this section we prove spectral bounds for the product of two unitary operators using the geometry of $(U,d)$. In the case of equality in the bounds we study the relation among eigenspaces associated to the eigenvalues with maximum and minimum arguments of the unitaries. This is done using the action of $U$ on $\s$.

\begin{defn}
For a unitary operator $u$ such that $-1\notin\spec(u)$ we define
$$\theta_+(u)=\max\spec(-i\log(u))\quad\mbox{ and }\quad \theta_-(u)=\min\spec(-i\log(u)),$$
where $\log$ is the principal branch of the logarithm and $\spec$ is the spectrum of an operator. We also define
$$\Hi_+(u)=\{\xi\in\Hi:u\xi=e^{i\theta_+(u)}\xi\}\quad\mbox{ and }\quad\Hi_-(u)=\{\xi\in\Hi:u\xi=e^{i\theta_-(u)}\xi\}$$
as the eigenspaces of $e^{i\theta_+(u)}$ and $e^{i\theta_-(u)}$ respectively.
\end{defn}

\begin{lem}\label{lempspec}
If $u\in U$ such $d(\id,u)<\pi$ then $-1\notin\spec(u)$. If $-1\notin\spec(u)$ then
$$d(\id,u)=\max\{\theta_+(u),-\theta_-(u)\}.$$
Also, if $-\pi-\theta_-(u)<\lambda<\pi-\theta_+(u)$ then $-1\notin\spec(e^{i\lambda}u)$,
$$\theta_+(e^{i\lambda}u)=\lambda +\theta_+(u) \quad\mbox{ and }\quad\theta_-(e^{i\lambda}u)=\lambda +\theta_-(u).$$
\end{lem}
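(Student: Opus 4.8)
The first two assertions are essentially a dictionary between the rectifiable distance and the functional calculus. The plan is as follows. Write $u=e^z$ with $z\in i\B(\Hi)_h$ and $\|z\|\le\pi$; by spectral theory $\spec(-iz)\subseteq[-\pi,\pi]$, and one may choose $z$ so that $\spec(-iz)\subseteq(-\pi,\pi]$ with $-\pi$ occurring only if $-1\in\spec(u)$. If $d(\id,u)<\pi$, then by Proposition \ref{minimalgeod} we may take $\|z\|=d(\id,u)<\pi$, so $\spec(-iz)\subseteq(-\pi,\pi)$ and hence $-1\notin\spec(e^z)=\spec(u)$. Conversely, if $-1\notin\spec(u)$, then $z=i\log(u)$ with $\log$ the principal branch has $\spec(-iz)\subseteq(-\pi,\pi)$, $\theta_+(u)=\max\spec(-iz)$ and $\theta_-(u)=\min\spec(-iz)$, so $\|z\|=\max\{|\theta_+(u)|,|\theta_-(u)|\}=\max\{\theta_+(u),-\theta_-(u)\}$ (note $\theta_+\ge 0\ge\theta_-$ is not assumed, but $\max\{\theta_+,-\theta_-\}$ equals the sup of absolute values regardless of signs). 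Since $\|z\|<\pi$, Proposition \ref{minimalgeod} gives $d(\id,u)=\|z\|=\max\{\theta_+(u),-\theta_-(u)\}$, which is the second claim.

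For the third assertion, fix $\lambda$ with $-\pi-\theta_-(u)<\lambda<\pi-\theta_+(u)$. Using the same $z=i\log(u)$ as above, the operator $e^{i\lambda}u=e^{i\lambda\id}e^{z}=e^{z+i\lambda\id}$ (the summands commute), and $-i(z+i\lambda\id)=-iz+\lambda\id$ has spectrum $\spec(-iz)+\lambda\subseteq(\theta_-(u)+\lambda,\theta_+(u)+\lambda)$ after noting the endpoints: $\min$ is $\theta_-(u)+\lambda>-\pi$ and $\max$ is $\theta_+(u)+\lambda<\pi$ by the hypothesis on $\lambda$. Hence $\spec(-i(z+i\lambda\id))\subseteq(-\pi,\pi)$, so $-1\notin\spec(e^{i\lambda}u)$, and moreover $z+i\lambda\id$ is exactly $i\log(e^{i\lambda}u)$ for the principal branch since its spectrum under $-i(\cdot)$ lands in $(-\pi,\pi)$. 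Reading off the extreme points of the spectrum then gives $\theta_\pm(e^{i\lambda}u)=\lambda+\theta_\pm(u)$.

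The only genuinely delicate point is the interplay between the choice of logarithm branch and the spectral mapping theorem: one must be careful that "$z=i\log(u)$ with $\log$ the principal branch" indeed has spectrum of $-iz$ landing in $(-\pi,\pi)$ precisely when $-1\notin\spec(u)$, and that adding $i\lambda\id$ keeps us in the regime where the principal branch of $\log$ of the translated unitary is obtained by the corresponding translation. This is a standard application of the spectral mapping theorem for the (continuous) functional calculus of a normal operator, together with the fact that $t\mapsto t$ on $(-\pi,\pi)$ is the inverse of $\theta\mapsto e^{i\theta}$ composed with the principal logarithm; I would phrase it via the spectral measure of the self-adjoint operator $-iz$ to avoid any ambiguity at the boundary. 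Everything else is bookkeeping with Proposition \ref{minimalgeod} and the elementary identity $\max\{|a|,|b|\}=\max\{\theta_+,-\theta_-\}$ when $\theta_+=\max$ and $\theta_-=\min$ of a bounded real set.
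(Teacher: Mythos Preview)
Your proof is correct and follows essentially the same route as the paper's: both invoke Proposition~\ref{minimalgeod} to identify $d(\id,u)$ with the norm of a skew-hermitian logarithm of $u$ and then read off the spectral information via functional calculus, with the paper leaving the third assertion to the reader while you spell it out. One minor slip: when $-1\notin\spec(u)$ you should set $z=\log(u)$ rather than $z=i\log(u)$, since $\log(u)\in i\B(\Hi)_h$ already; with this correction your computation of $-iz=-i\log(u)$ and the remainder of the argument proceed exactly as written.
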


\begin{proof}
By Proposition \ref{minimalgeod} the curve $\mu(t)=e^{t\log(u)}$ joins the identity operator $\id$ and $u$ and has minimal length $d(\id,u)=\Le(\mu)=\|\log(u)\|$. If $\|\log(u)\|<\pi$ then $-1\notin\spec(u)$, and in this case  $\|\log(u)\|=\max\{\theta_+(u),-\theta_-(u)\}$. The second assertion is straightforward and left to the reader. 
\end{proof}

\begin{thm}\label{ineqspec}
Let $u$ and $v$ be unitary operators acting on the Hilbert space $\Hi$ such that $-1\notin\spec(u)$ and $-1\notin\spec(v)$, and such that $\theta_+(u)+\theta_+(v)<\pi$ and $\theta_-(u)+\theta_-(v)>-\pi$. Then $-1\notin\spec(uv)$,
$$\theta_+(u)+\theta_+(v)\geq\theta_+(uv)\quad\mbox{ and }\quad\theta_-(u)+\theta_-(v)\leq\theta_-(uv).$$
If $\theta_+(uv)=\theta_+(u)+\theta_+(v)<\pi$, then
$$\Hi_+(u)\cap\Hi_+(v)=\Hi_+(uv),$$
and if $\theta_-(uv)=\theta_-(u)+\theta_-(v)>-\pi$, then
$$\Hi_-(u)\cap\Hi_-(v)=\Hi_-(uv).$$
\end{thm}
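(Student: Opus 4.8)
The plan is to reduce the spectral inequalities to the triangle inequality in $(U,d)$ after a suitable rotation, and to handle the equality case via the action on the sphere $\s$ using Lemma~\ref{decreaselength} together with the rigidity information contained in Proposition~\ref{minimalgeod}. First I would pick real numbers $\lambda,\kappa$ so that, upon replacing $u$ by $e^{i\lambda}u$ and $v$ by $e^{i\kappa}v$, the rotated unitaries become ``balanced'': concretely choose $\lambda$ with $\theta_+(e^{i\lambda}u)=-\theta_-(e^{i\lambda}u)$ and similarly $\kappa$ for $v$. By Lemma~\ref{lempspec} this is possible precisely because the hypotheses $\theta_+(u)+\theta_+(v)<\pi$ and $\theta_-(u)+\theta_-(v)>-\pi$ leave enough room, and after rotating we have $d(\id,e^{i\lambda}u)=\tfrac12(\theta_+(u)-\theta_-(u))$ and likewise for $v$, while $d(\id,e^{i(\lambda+\kappa)}uv)\le d(\id,e^{i\lambda}u)+d(\id,e^{i\kappa}v)$ by the triangle inequality and left-invariance of $d$. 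Feeding this back through Lemma~\ref{lempspec} (using that the total rotation keeps us in the range where $-1\notin\spec$) gives $\theta_+(e^{i(\lambda+\kappa)}uv)\le d(\id,e^{i(\lambda+\kappa)}uv)<\pi$, hence $-1\notin\spec(uv)$, and unwinding the rotations yields the two inequalities $\theta_+(uv)\le\theta_+(u)+\theta_+(v)$ and $\theta_-(uv)\ge\theta_-(u)+\theta_-(v)$.

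For the equality case, suppose $\theta_+(uv)=\theta_+(u)+\theta_+(v)<\pi$. The inclusion $\Hi_+(u)\cap\Hi_+(v)\subseteq\Hi_+(uv)$ is immediate: if $\xi$ is a unit vector fixed up to the phase $e^{i\theta_+(v)}$ by $v$ and up to $e^{i\theta_+(u)}$ by $u$, then $uv\xi=e^{i(\theta_+(u)+\theta_+(v))}\xi=e^{i\theta_+(uv)}\xi$. The substantive direction is $\Hi_+(uv)\subseteq\Hi_+(u)\cap\Hi_+(v)$. Here I would take a unit eigenvector $\xi\in\Hi_+(uv)$, so $uv\xi=e^{i\theta_+(uv)}\xi$, and track it along the concatenated minimal geodesic: the curve $t\mapsto e^{it\log v}$ from $\id$ to $v$ followed by $t\mapsto v\,e^{it\log u}$ — or rather the appropriately rotated versions — projects under $\rho_\xi$ to a path in $\s$ from $\xi$ to $uv\xi$. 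The point is that equality in the spectral bound forces equality in a chain of inequalities: the length of this projected path in $\s$ must equal the spherical distance $d_{\s}(\xi,e^{i\theta_+(uv)}\xi)$, which by Lemma~\ref{lempspec}-type reasoning on $\s$ is exactly $\theta_+(uv)=\theta_+(u)+\theta_+(v)$, and this can only happen if each leg is itself minimal in $\s$ and the two legs are ``aligned'' (their spherical turning adds without loss). Analyzing minimality of $\rho_\xi$ applied to $t\mapsto e^{it\log v}$ — in the spirit of the proof of Proposition~\ref{minimalgeod}, where minimality is detected by $\xi$ being a norming eigenvector with eigenvalue $\pm\|{\log v}\|$ — should force $v\xi=e^{i\theta_+(v)}\xi$, i.e.\ $\xi\in\Hi_+(v)$, and then $u(v\xi)=e^{i\theta_+(u)}(v\xi)$ forces $v\xi\in\Hi_+(u)$, whence also $\xi\in\Hi_+(u)$ since $v\xi$ and $\xi$ differ by a scalar. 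The $\theta_-$ statement follows by the same argument applied to $u^{-1},v^{-1}$, or by replacing $u,v$ with $e^{i\pi}u$-type reflections, using $\theta_-(u)=-\theta_+(u^{-1})$.

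I expect the main obstacle to be making the alignment/rigidity argument in the equality case fully rigorous in the infinite-dimensional setting, where $\log u$ and $\log v$ need not have norming eigenvectors but only norming functionals. The clean statement of Proposition~\ref{minimalgeod} gives minimality via approximation, but to extract the eigenvector conclusion $v\xi=e^{i\theta_+(v)}\xi$ from equality one really needs the spectral-measure version: decompose $\xi$ against the spectral measure of $\log v$ and observe that the projected path $\rho_\xi(v\,e^{it\log u}\cdots)$ has length at least $\|{\log v}\|$ with equality only if $\xi$ is supported on the extreme part of the spectrum. Turning the soft length-comparison of Lemma~\ref{decreaselength} into this sharp spectral statement — and correctly bookkeeping the two rotations $e^{i\lambda},e^{i\kappa}$ so that the eigenvalue shifts land exactly at $\theta_+$ — is the technical heart; once that is in place, the intersection-of-eigenspaces identity drops out by the elementary scalar-multiple observations above.
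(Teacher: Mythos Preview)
Your argument for the inequalities is exactly the paper's: centre the spectra of $u$ and $v$, apply the triangle inequality in $(U,d)$ together with left-invariance, and undo the rotation via Lemma~\ref{lempspec}. Nothing to add there.

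For the equality case your overall idea (project to $\s$ and exploit rigidity) is also the paper's, but you make it harder than necessary and the obstacle you anticipate is illusory. You propose to study the projected curve leg by leg and to \emph{invert} the reasoning of Proposition~\ref{minimalgeod} (``minimality of $\rho_\xi(e^{it\log v})$ should force $\xi$ to be a norming eigenvector of $\log v$''), then worry that in infinite dimensions this requires a spectral-measure argument. The paper sidesteps all of this: after centring, take $\xi\in\Hi_+(u_{\cen}v_{\cen})\cap\s$ and look only at the three \emph{points} $\xi$, $v_{\cen}\xi$, $u_{\cen}v_{\cen}\xi=e^{i\theta_+(u_{\cen}v_{\cen})}\xi$ in $\s$. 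Lemma~\ref{decreaselength} gives $d_\s(\xi,v_{\cen}\xi)\le d(\id,v_{\cen})=\theta_+(v_{\cen})$ and $d_\s(v_{\cen}\xi,u_{\cen}v_{\cen}\xi)\le\theta_+(u_{\cen})$; the equality hypothesis forces both of these and the triangle inequality in $\s$ to be equalities. Since the two endpoints are $\xi$ and $e^{i\theta_+(u_{\cen}v_{\cen})}\xi$ with $\theta_+(u_{\cen}v_{\cen})<\pi$, the unique minimizing geodesic between them is $t\mapsto e^{it}\xi$, so the intermediate point $v_{\cen}\xi$ must lie on it, i.e.\ $v_{\cen}\xi=e^{i\theta_+(v_{\cen})}\xi$. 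No analysis of the projected path, no norming functionals, no spectral measure. (Minor slip: your concatenated curve $e^{it\log v}$ followed by $v\,e^{it\log u}$ ends at $vu$, not $uv$; for the path to $uv$ the second leg should be $e^{it\log u}v$.)
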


\begin{proof}
We define $u_{\cen}=e^{-i\frac{1}{2}(\theta_+(u)+\theta_-(u))}u$ and $v_{\cen}=e^{-i\frac{1}{2}(\theta_+(v)+\theta_-(v))}v$. By Lemma \ref{lempspec} these operators satisfy $\theta_+(u_{\cen})=-\theta_-(u_{\cen})=\frac{1}{2}(\theta_+(u)-\theta_-(u))$ and $\theta_+(v_{\cen})=-\theta_-(v_{\cen})=\frac{1}{2}(\theta_+(v)-\theta_-(v))$, hence 

$$d(\id,u_{\cen})=\frac{1}{2}(\theta_+(u)-\theta_-(u))\quad \mbox{ and }\quad   d(\id,v_{\cen})=\frac{1}{2}(\theta_+(v)-\theta_-(v)).$$
By the triangle inequality in $(U,d)$ we have
\begin{align} 
d(\id,u_{\cen} v_{\cen})&\leq d(\id,u_{\cen})+d(u_{\cen},u_{\cen} v_{\cen})\nonumber\\
&=d(\id,u_{\cen})+d(\id, v_{\cen})\label{ineq1}\\
&=\frac{1}{2}(\theta_+(u)-\theta_-(u))+\frac{1}{2}(\theta_+(v)-\theta_-(v))<\pi,\nonumber
\end{align}
where $d(u_{\cen},u_{\cen} v_{\cen})=d(\id,v_{\cen})$ follows from translation invariance of the metric. Since $d(\id,u_{\cen} v_{\cen})<\pi$ by Lemma \ref{lempspec} we conclude that $-\id\notin u_{\cen} v_{\cen}$, so that $\theta_+(u_{\cen} v_{\cen})$ is well defined and   
\begin{align}\label{ineq2} 
d(\id,u_{\cen} v_{\cen})\geq \theta_+(u_{\cen} v_{\cen}).
\end{align}
Note that $e^{i\frac{1}{2}S}u_{\cen} v_{\cen}=uv$ where $S=\theta_+(u)+\theta_-(u)+\theta_+(v)+\theta_-(v)$, and since $-\pi<\theta_-(u)+\theta_-(v)\leq\theta_+(u)+\theta_+(v)<\pi$ we have $-\pi-\theta_-(u_{\cen}v_{\cen})<\frac{1}{2}S<\pi-\theta_+(u_{\cen}v_{\cen})$, hence by Lemma \ref{lempspec} $-\id\notin\spec(uv)$ and $\theta_+(u_{\cen} v_{\cen})+\frac{1}{2}S=\theta_+(u v)$. If we combine inequalities (\ref{ineq1}) and (\ref{ineq2}) we get
$$\frac{1}{2}(\theta_+(u)-\theta_-(u)) +\frac{1}{2}(\theta_+(v)-\theta_-(v))\geq \theta_+(uv)-\frac{1}{2}(\theta_+(u)+\theta_-(u)+\theta_+(v)+\theta_-(v)),$$
which is equivalent to $\theta_+(u)+\theta_+(v)\geq\theta_+(uv)$. 

In the same way, since $d(\id,u_{\cen} v_{\cen})\geq - \theta_
-(u_{\cen} v_{\cen})=-(\theta_-(u v)-\frac{1}{2}S)$ we get
$$\frac{1}{2}(\theta_+(u)-\theta_-(u)) +\frac{1}{2}(\theta_+(v)-\theta_-(v))\geq -\theta_-(uv)+\frac{1}{2}(\theta_+(u)+\theta_-(u)+\theta_+(v)+\theta_-(v)),$$
which is equivalent to $\theta_-(u)+\theta_-(v)\leq\theta_-(uv)$.

Assume that $\theta_+(uv)=\theta_+(u)+\theta_+(v)<\pi$. The proof of the inclusion $\Hi_+(u)\cap\Hi_+(v)\subseteq\Hi_+(uv)$ is straightforward and left to the reader. We prove that $\Hi_+(uv)\subseteq\Hi_+(u)\cap\Hi_+(v)$. Note that $\Hi_+(u_{\cen})=\Hi_+(u)$, $\Hi_+(v_{\cen})=\Hi_+(v)$ and $\Hi_+(u_{\cen}v_{\cen})=\Hi_+(uv)$, so it is enough to prove the statement for $u_{\cen}$ and $v_{\cen}$. The following inequality holds
$$\theta_+(u_{\cen}v_{\cen})=\theta_+(u_{\cen})+\theta_+(v_{\cen})=-\theta_-(u_{\cen})-\theta_-(v_{\cen})\geq -\theta_-(u_{\cen}v_{\cen}),$$
hence by Lemma \ref{lempspec}, $d(\id,u_{\cen}v_{\cen})=\theta_+(u_{\cen}v_{\cen})<\pi$. Let $\xi\in\Hi_+(u_{\cen}v_{\cen})\cap\s$, so that 
$$u_{\cen}v_{\cen}\xi=e^{i\theta_+(u_{\cen}v_{\cen})}\xi\quad\mbox{ and }\quad d_{\s}(\xi,u_{\cen}v_{\cen}\xi)=\theta_+(u_{\cen}v_{\cen}),$$ 
since $\theta_+(u_{\cen}v_{\cen})<\pi$. By Lemma \ref{decreaselength} we conclude that 
$$d_{\s}(\xi,v_{\cen}\xi)\leq d(\id,v_{\cen})=\theta_+(v_{\cen})\quad\mbox{ and }\quad d_{\s}(v_{\cen}\xi,u_{\cen}v_{\cen}\xi)\leq d(\id,u_{\cen})=\theta_+(u_{\cen}).$$
Therefore
\begin{align*}
\theta_+(u_{\cen}v_{\cen})&=d_{\s}(\xi,u_{\cen}v_{\cen}\xi)\leq d_{\s}(\xi,v_{\cen}\xi)+d_{\s}(v_{\cen}\xi,u_{\cen}v_{\cen}\xi)\\
&\leq \theta_+(u_{\cen})+\theta_+(w_{\cen})=\theta_+(u_{\cen}v_{\cen}),
\end{align*}
and this implies 
$$d_{\s}(\xi,u_{\cen}v_{\cen}\xi)= d_{\s}(\xi,v_{\cen}\xi)+d_{\s}(v_{\cen}\xi,u_{\cen}v_{\cen}\xi),$$ 
$d_{\s}(\xi,v_{\cen}\xi)=\theta_+(v_{\cen}) $ and $d_{\s}(v_{\cen}\xi,u_{\cen}v_{\cen}\xi)=\theta_+(u_{\cen})$. The triangle inequality in $\s$ is an equality and this means that $\xi$, $v_{\cen}\xi$ and $u_{\cen}v_{\cen}\xi=e^{i\theta_+(u_{\cen}v_{\cen})}\xi$ all lie in the same geodesic in $\s$. Therefore $v_{\cen}\xi=e^{id_{\s}(\xi,v_{\cen}\xi)}\xi=e^{i\theta_+(v_{\cen})}\xi$, that is $\xi\in \Hi_+(v_{\cen})$. Since $e^{i\theta_+(u_{\cen}v_{\cen})}\xi=u_{\cen}v_{\cen}\xi=e^{i\theta_+(v_{\cen})}u_{\cen}\xi$ we see that $u_{\cen}\xi=e^{i\theta_+(u_{\cen}v_{\cen})-i\theta_+(v_{\cen})}\xi=e^{i\theta_+(u_{\cen})}\xi$, so that $\xi\in \Hi_+(u_{\cen})$.  The proof of $\Hi_+(u_{\cen}v_{\cen})\subseteq\Hi_+(u_{\cen})\cap\Hi_+(v_{\cen})$ is complete. The proof of last equality in the statement of the theorem is analogous. 

\end{proof}

\begin{rem}
Using Lemma \ref{lempspec} one can change one of the two inequalities $\theta_+(u)+\theta_+(v)<\pi$ and $\theta_-(u)+\theta_-(v)>-\pi$ in the statement of Theorem \ref{ineqspec} into an inequality with $\leq$ or $\geq$ respectively. 
\end{rem}

\bigskip

\section{Spectral bounds for curves of unitaries}\label{maintheoremcurve}
In this section we generalize Theorem \ref{ineqspec} to the case of piecewise $C^1$ curves in $U$. The second part of the proof of the main theorem of the section has two proofs, one is a generalization of the argument in Theorem \ref{ineqspec} and the other one uses Theorem \ref{normingfunc}, which was proved in \cite{larotonda}.

\begin{defn}
For a skew hermitian operator $x$ we define
$$\phi_+(x)=\max\spec(-ix)\quad\mbox{ and }\quad \phi_-(x)=\min\spec(-ix).$$
The operator $x_{\cen}$ with centred spectrum is 
$$x_{\cen}=x-\frac{i}{2}(\phi_+(x)+\phi_-(x)).$$
We also define
$$\Hi_+(u)=\{\xi\in\Hi:x\xi=i\phi_+(x)\xi\}\quad\mbox{ and }\quad\Hi_-(x)=\{\xi\in\Hi:x\xi=i\phi_-(x)\xi\}$$
as the eigenspaces of $i\phi_+(x)$ and $i\phi_-(x)$ respectively.
\end{defn}

\begin{rem}
Note that if $x\in i\B(\Hi)_h$ satisfies $\|x\|<\pi$, then $\phi_+(x)=\theta_+(e^x)$, $\phi_-(x)=\theta_-(e^x)$, $\Hi_+(x)=\Hi_+(e^x)$ and $\Hi_-(x)=\Hi_-(e^x)$.
\end{rem}

\begin{lem}\label{lrlog}
Let $\gamma:[a,b]\to U$ be a piecewise $C^1$ curve such that $\gamma_a=\id$, and let $\xi\in \Hi$. If 
$$\gamma_t^{-1}\dot{\gamma}_t\xi=if(t)\xi$$
for a piecewise continuous function $f:[a,b]\to\R$ and $t\in [a,b]$ then 
$$\dot{\gamma}_t\gamma_t^{-1}\xi=\gamma_t^{-1}\dot{\gamma}_t\xi=e^{i\int_a^t f(s)ds}\xi$$
for $t\in [a,b]$. The same conclusion holds if $\dot{\gamma}_t\gamma_t^{-1}\xi=if(t)\xi$ for $t\in [a,b]$.
\end{lem}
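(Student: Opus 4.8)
The plan is to turn the hypothesis into a scalar linear ODE for the $\Hi$-valued function $\eta_t := \gamma_t\xi$. Since $\gamma$ is piecewise $C^1$ it is in particular continuous on $[a,b]$, so $\eta$ is continuous there and $C^1$ off the finite set $F$ of breakpoints of $\gamma$ and $f$; and on $[a,b]\setminus F$ the hypothesis $\gamma_t^{-1}\dot{\gamma}_t\xi = if(t)\xi$ gives
$$\dot{\eta}_t=\dot{\gamma}_t\xi=\gamma_t\bigl(\gamma_t^{-1}\dot{\gamma}_t\xi\bigr)=\gamma_t\bigl(if(t)\xi\bigr)=if(t)\,\eta_t.$$

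Next I would integrate this with an integrating factor. Set $g(t):=\int_a^t f(s)\,ds$, a Lipschitz function, and consider $h_t:=e^{-ig(t)}\eta_t$, which is continuous on $[a,b]$. On each interval of smoothness $\dot{h}_t=e^{-ig(t)}(\dot{\eta}_t-if(t)\eta_t)=0$, so $h$ is locally constant on $[a,b]\setminus F$; being continuous on the whole of $[a,b]$ it is therefore constant, equal to $h_a=\eta_a=\gamma_a\xi=\xi$. Hence $\gamma_t\xi=\eta_t=e^{ig(t)}\xi=e^{i\int_a^t f(s)\,ds}\xi$ for all $t$, so $\xi$ is an eigenvector of every $\gamma_t$; then $\gamma_t^{-1}\xi=e^{-ig(t)}\xi$, and a one-line computation gives $\dot{\gamma}_t\gamma_t^{-1}\xi=e^{-ig(t)}\dot{\gamma}_t\xi=e^{-ig(t)}\cdot if(t)e^{ig(t)}\xi=if(t)\xi=\gamma_t^{-1}\dot{\gamma}_t\xi$, which together with $\gamma_t\xi=e^{i\int_a^t f}\xi$ is the asserted conclusion.

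For the final sentence I would run the symmetric argument on $\zeta_t:=\gamma_t^{-1}\xi$: differentiating through $\frac{d}{dt}\gamma_t^{-1}=-\gamma_t^{-1}\dot{\gamma}_t\gamma_t^{-1}$ and using the hypothesis $\dot{\gamma}_t\gamma_t^{-1}\xi=if(t)\xi$ yields $\dot{\zeta}_t=-\gamma_t^{-1}\bigl(\dot{\gamma}_t\gamma_t^{-1}\xi\bigr)=-if(t)\zeta_t$ on $[a,b]\setminus F$ with $\zeta_a=\xi$, and the same integrating-factor computation forces $\gamma_t^{-1}\xi=e^{-i\int_a^t f}\xi$, i.e. $\gamma_t\xi=e^{i\int_a^t f}\xi$ again, with the logarithmic-derivative identities following as before. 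There is essentially no obstacle here; the only point requiring a little care is that the fundamental theorem of calculus must be applied piece by piece and the resulting locally constant function glued across the finitely many breakpoints using continuity of $\gamma$, rather than differentiated over the whole interval at once.
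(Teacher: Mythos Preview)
Your argument is correct and follows the same route as the paper: reduce to the scalar ODE $\dot\eta_t=if(t)\eta_t$ for $\eta_t=\gamma_t\xi$, integrate to get $\gamma_t\xi=e^{i\int_a^t f}\xi$, and read off the equality of the two logarithmic derivatives on $\xi$. Your write-up is in fact more careful than the paper's about the piecewise issues and about the symmetric hypothesis, and you have correctly identified the intended conclusion (the displayed formula in the statement should really read $\gamma_t\xi=e^{i\int_a^t f(s)\,ds}\xi$, which is what the paper's own proof establishes).
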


\begin{proof}
Since $\gamma_t^{-1}\dot{\gamma}_t\xi=if(t)\xi$ we have
$$\dot{\gamma}_t\xi=if(t)\gamma_t\xi$$
for all $t\in [a,b]$. If we set $\eta_t=\gamma_t\xi$ we see that $\eta_a=\id\xi=\xi$ and $\dot{\eta}_t=if(t)\eta_t$ for all $t\in [a,b]$. Therefore 
$$\gamma_t\xi=\eta_t=e^{i\int_a^tf(s)ds}\xi$$
for all $t\in [a,b]$, and from this formula the conclusion of the lemma follows.
\end{proof}

\begin{thm}\label{mainthm}
Let $\gamma:[a,b]\to U$ be a piecewise $C^1$ curve such that $\gamma_a=\id$,
$$\int_a^b\phi_+(\dot{\gamma}_t\gamma_t^{-1})dt<\pi\quad\mbox{ and }\quad\int_a^b\phi_-(\dot{\gamma}_t\gamma_t^{-1})dt>-\pi,$$
then $-1\notin\spec(\gamma_b)$ and
$$\theta_+(\gamma_b)\leq \int_a^b\phi_+(\dot{\gamma}_t\gamma_t^{-1})dt\quad\mbox{ and }\quad\theta_-(\gamma_b)\geq \int_a^b\phi_-(\dot{\gamma}_t\gamma_t^{-1})dt.$$
If $\theta_+(\gamma_b)=\int_a^b\phi_+(\dot{\gamma}_t\gamma_t^{-1})dt$ then
$$\Hi_+(\gamma_b)=\bigcap_{t\in [a,b]}\Hi_+(\dot{\gamma}_t\gamma_t^{-1})=\bigcap_{t\in [a,b]}\Hi_+(\gamma_t^{-1}\dot{\gamma}_t),$$
and if $\theta_-(\gamma_b)=\int_a^b\phi_-(\dot{\gamma}_t\gamma_t^{-1})dt$ then
$$\Hi_-(\gamma_b)=\bigcap_{t\in[a,b]}\Hi_-(\dot{\gamma}_t\gamma_t^{-1})=\bigcap_{t\in[a,b]}\Hi_-(\gamma_t^{-1}\dot{\gamma}_t),$$
\end{thm}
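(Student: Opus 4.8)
The strategy is to reduce Theorem \ref{mainthm} to a limiting version of the two-factor case (Theorem \ref{ineqspec}), using the length functional in $(U,d)$ and the action on $\s$ exactly as before, but now with an integral replacing the two-term sum. First I would center the curve: set $c(t)=\tfrac12(\phi_+(\dot\gamma_t\gamma_t^{-1})+\phi_-(\dot\gamma_t\gamma_t^{-1}))$, let $C(t)=\int_a^t c(s)\,ds$, and replace $\gamma$ by $\gamma^{\cen}_t=e^{-iC(t)}\gamma_t$. Then $\dot\gamma^{\cen}_t(\gamma^{\cen}_t)^{-1}=-ic(t)\,\id+\dot\gamma_t\gamma_t^{-1}$, which has centered spectrum, so $\|\dot\gamma^{\cen}_t(\gamma^{\cen}_t)^{-1}\|=\tfrac12(\phi_+(\dot\gamma_t\gamma_t^{-1})-\phi_-(\dot\gamma_t\gamma_t^{-1}))=\phi_+(\dot\gamma^{\cen}_t(\gamma^{\cen}_t)^{-1})$. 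Integrating, $\Le(\gamma^{\cen})=\int_a^b\|\dot\gamma^{\cen}_t(\gamma^{\cen}_t)^{-1}\|\,dt<\pi$ by the hypotheses. Hence $d(\id,\gamma^{\cen}_b)\le\Le(\gamma^{\cen})<\pi$, so by Lemma \ref{lempspec} $-1\notin\spec(\gamma^{\cen}_b)$ and $\theta_+(\gamma^{\cen}_b)\le d(\id,\gamma^{\cen}_b)\le\int_a^b\tfrac12(\phi_+-\phi_-)\,dt$. Since $\gamma_b=e^{iC(b)}\gamma^{\cen}_b$ and the total translation $C(b)=\int_a^b c(t)\,dt$ lies strictly between $-\pi-\theta_-(\gamma^{\cen}_b)$ and $\pi-\theta_+(\gamma^{\cen}_b)$ (because $-\pi<\int\phi_-\le\int\phi_+<\pi$), Lemma \ref{lempspec} gives $-1\notin\spec(\gamma_b)$, $\theta_\pm(\gamma_b)=\theta_\pm(\gamma^{\cen}_b)+C(b)$, and the two stated inequalities follow by unwinding, just as in Theorem \ref{ineqspec}.

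For the equality case, assume $\theta_+(\gamma_b)=\int_a^b\phi_+(\dot\gamma_t\gamma_t^{-1})\,dt$. As in the previous proof, centering does not change the relevant eigenspaces, so I may work with $\gamma^{\cen}$, where equality reads $\theta_+(\gamma^{\cen}_b)=\int_a^b\phi_+(\dot\gamma^{\cen}_t(\gamma^{\cen}_t)^{-1})\,dt=\Le(\gamma^{\cen})$, and one checks $d(\id,\gamma^{\cen}_b)=\theta_+(\gamma^{\cen}_b)<\pi$ as in Theorem \ref{ineqspec}. Pick $\xi\in\Hi_+(\gamma^{\cen}_b)\cap\s$; then $\gamma^{\cen}_b\xi=e^{i\theta_+(\gamma^{\cen}_b)}\xi$ and $d_{\s}(\xi,\gamma^{\cen}_b\xi)=\theta_+(\gamma^{\cen}_b)$ since this angle is $<\pi$. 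Consider the curve $t\mapsto\rho_\xi(\gamma^{\cen}_t)=\gamma^{\cen}_t\xi$ in $\s$: by Lemma \ref{decreaselength} its length is at most $\Le(\gamma^{\cen})=\theta_+(\gamma^{\cen}_b)=d_{\s}(\xi,\gamma^{\cen}_b\xi)$, so it is in fact a minimal (hence, being shorter than $\pi$, a unique minimizing great-circle) geodesic from $\xi$ to $\gamma^{\cen}_b\xi$, traversed with speed $\le\|\dot\gamma^{\cen}_t(\gamma^{\cen}_t)^{-1}\|=\phi_+(\dot\gamma^{\cen}_t(\gamma^{\cen}_t)^{-1})$ at each $t$, with equality of total length forcing equality of speeds a.e. On this great circle $\gamma^{\cen}_t\xi=e^{i\psi(t)}\xi$ for a nondecreasing $\psi$ with $\psi(a)=0$, $\dot\psi(t)=\phi_+(\dot\gamma^{\cen}_t(\gamma^{\cen}_t)^{-1})$; differentiating gives $\dot\gamma^{\cen}_t(\gamma^{\cen}_t)^{-1}\xi=i\dot\psi(t)\xi=i\phi_+(\dot\gamma^{\cen}_t(\gamma^{\cen}_t)^{-1})\xi$, i.e. $\xi\in\Hi_+(\dot\gamma^{\cen}_t(\gamma^{\cen}_t)^{-1})$ for all $t$; by Lemma \ref{lrlog} the same holds with the left logarithmic derivative. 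The reverse inclusion $\bigcap_t\Hi_+(\dot\gamma_t\gamma_t^{-1})\subseteq\Hi_+(\gamma_b)$ is the easy direction: if $\dot\gamma_t\gamma_t^{-1}\xi=i\phi_+(\dot\gamma_t\gamma_t^{-1})\xi$ for all $t$, then by Lemma \ref{lrlog} $\gamma_b\xi=e^{i\int_a^b\phi_+(\dot\gamma_t\gamma_t^{-1})\,dt}\xi=e^{i\theta_+(\gamma_b)}\xi$. The minimum-argument statement is proved symmetrically (replace $\gamma$ by its reverse or use $\phi_-=-\phi_+(-\,\cdot\,)$).

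Alternatively, the equality case admits a proof via Theorem \ref{normingfunc}: equality $\Le(\gamma^{\cen})=d(\id,\gamma^{\cen}_b)=\|z\|$ with $z=\log\gamma^{\cen}_b$, $\|z\|<\pi$, forces $\gamma^{\cen}=e^{\Gamma}$ with $\Gamma\subseteq B_\pi$ and a common norming functional $\psi$ with $\psi(\Gamma_t)=\|\Gamma_t\|$ and $\psi((\gamma^{\cen}_t)^{-1}\dot\gamma^{\cen}_t)=\|(\gamma^{\cen}_t)^{-1}\dot\gamma^{\cen}_t\|$ for all $t$; since at each $t$ the norming functionals of the self-adjoint $-i(\gamma^{\cen}_t)^{-1}\dot\gamma^{\cen}_t$ are supported on $\Hi_+(\dot\gamma^{\cen}_t(\gamma^{\cen}_t)^{-1})$ (here using $\|\cdot\|=\phi_+$ for centered spectrum and that $\psi$ must attain the norm at the top eigenvalue), one reads off $\Hi_+(\gamma^{\cen}_b)=\Ker(z-i\|z\|)\subseteq\Hi_+(\dot\gamma^{\cen}_t(\gamma^{\cen}_t)^{-1})$ for every $t$.

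The main obstacle I anticipate is the measure-theoretic bookkeeping in the equality step: $\Le(\gamma^{\cen})=d_{\s}(\xi,\gamma^{\cen}_b\xi)$ only forces $\|\tfrac{d}{dt}(\gamma^{\cen}_t\xi)\|=\|\dot\gamma^{\cen}_t(\gamma^{\cen}_t)^{-1}\|$ almost everywhere, so one gets $\dot\gamma^{\cen}_t(\gamma^{\cen}_t)^{-1}\xi=i\phi_+(\dot\gamma^{\cen}_t(\gamma^{\cen}_t)^{-1})\xi$ only a.e.\ in $t$; promoting this to \emph{every} $t$ in $[a,b]$ (as the statement demands, since $\Hi_+$ is defined for each $t$) requires a continuity argument — e.g.\ noting that on each $C^1$ piece both sides are continuous in $t$ and agree on a dense set, or passing to the integrated identity $\gamma^{\cen}_t\xi=e^{i\int_a^t\phi_+}\xi$ (genuinely valid for all $t$ by continuity of $\gamma^{\cen}\xi$ and of the integral) and then differentiating on each smooth piece. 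Getting the exceptional endpoints of the finitely many $C^1$ pieces right, and the two-sided-derivative issue there, is the only place where care beyond the two-factor argument is needed.
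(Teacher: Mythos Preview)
Your plan is correct and follows the paper's proof essentially line for line: the centering $\gamma^{\cen}_t=e^{-iC(t)}\gamma_t$ is exactly the paper's $\alpha_t=\gamma_t e^{-\frac{i}{2}S_t}$ (with $C=\tfrac12 S$), the length bound $d(\id,\gamma^{\cen}_b)\le\Le(\gamma^{\cen})<\pi$ and the shift via Lemma~\ref{lempspec} match verbatim, and your two proofs of the equality case (via $\rho_\xi$ on the sphere, and via Theorem~\ref{normingfunc}) are the same two arguments the paper gives. One small refinement: in the alternative proof the paper does not argue abstractly about where norming functionals are ``supported'' but instead, for each $\xi\in\Hi_+(\gamma^{\cen}_b)$, takes the concrete functional $\psi_\xi(z)=\langle -iz\xi,\xi\rangle$, applies Theorem~\ref{normingfunc} to get $\langle -i(\gamma^{\cen}_t)^{-1}\dot\gamma^{\cen}_t\xi,\xi\rangle=\|(\gamma^{\cen}_t)^{-1}\dot\gamma^{\cen}_t\|$, and invokes equality in Cauchy--Schwarz---this is cleaner than your sketch and gives directly the inclusion you want. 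Your measure-theoretic caveat is well taken; the paper simply asserts pointwise equality from equality of integrals, which is justified exactly by the piecewise-continuity argument you outline.
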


\begin{proof}
Define the function $S:[a,b]\to\R$ by
$$S_t=\int_a^t(\phi_+(\dot{\gamma}_s\gamma_s^{-1})+\phi_-(\dot{\gamma}_s\gamma_s^{-1}))ds,$$
and the curve $\alpha:[a,b]\to U$ by
$$\alpha_t=\gamma_te^{-\frac{i}{2}S_t}.$$ 
Note that $\alpha_a=\id$ and the right logarithmic derivative of $\alpha$ is
\begin{align*}
\dot{\alpha}_t\alpha_t^{-1}&=(\dot{\gamma}_te^{-\frac{i}{2}S_t}-\frac{i}{2}(\phi_+(\dot{\gamma}_t\gamma_t^{-1})+\phi_-(\dot{\gamma}_t\gamma_t^{-1}))\gamma_te^{-\frac{i}{2}S_t})\gamma_t^{-1}e^{\frac{i}{2}S_t}\\
&=\dot{\gamma}_t\gamma_t^{-1}-\frac{i}{2}(\phi_+(\dot{\gamma}_t\gamma_t^{-1})+\phi_-(\dot{\gamma}_t\gamma_t^{-1}))=(\dot{\gamma}_t\gamma_t^{-1})_{\cen},
\end{align*}
which is the centred right logarithmic derivative of $\gamma$. By the definition of distance in $U$ we get
\begin{align}\label{e1}
d(\id,\alpha_b)&\leq \int_a^b\|\dot{\alpha}_t\alpha_t^{-1}\|dt=\int_a^b\|(\dot{\gamma}_t\gamma_t^{-1})_{\cen}\|dt\\
&=\frac{1}{2}\int_a^b(\phi_+(\dot{\gamma}_t\gamma_t^{-1})-\phi_-(\dot{\gamma}_t\gamma_t^{-1}))dt<\pi,
\end{align}
$\int_a^b\phi_+(\dot{\gamma}_t\gamma_t^{-1})dt<\pi$ and $\int_a^b\phi_-(\dot{\gamma}_t\gamma_t^{-1})dt>-\pi$. Therefore $-1\notin\spec(\alpha_b)$ and $d(\id,\alpha_b)=\max\{\theta_+(\alpha_b),-\theta_-(\alpha_b)\}$. In order to apply Lemma \ref{lempspec} to $e^{\frac{i}{2}S_b}\alpha_b=\gamma_b$ we have to check that 
$$-\pi-\theta_-(\alpha_b)<\frac{1}{2}S_b<\pi-\theta_+(\alpha_b).$$
We verify the second inequality, the first inequality is proved analogously. Note that 
$$\theta_+(\alpha_b)\leq d(\id,\alpha_b)\leq \frac{1}{2}\int_a^b(\phi_+(\dot{\gamma}_t\gamma_t^{-1})-\phi_-(\dot{\gamma}_t\gamma_t^{-1}))dt,$$
where the second inequality is (\ref{e1}). Hence
$$\theta_+(\alpha_b)+\frac{1}{2}S_b\leq \int_a^b\phi_+(\dot{\gamma}_t\gamma_t^{-1})dt<\pi.$$
So by Lemma \ref{lempspec}
$$\theta_+(\gamma_b)=\theta_+(\alpha_b)+\frac{1}{2}S_b\leq \int_a^b\phi_+(\dot{\gamma}_t\gamma_t^{-1})dt<\pi$$
and
$$\theta_-(\gamma_b)=\theta_-(\alpha_b)+\frac{1}{2}S_b \geq \int_a^b\phi_-(\dot{\gamma}_t\gamma_t^{-1})dt>-\pi.$$
This proves the first assertion of the theorem. 

We now prove the assertion about the equality cases in two different ways. Note that 
$\Hi_+(\gamma_b)=\Hi_+(\alpha_b)$ and 
$$\Hi_+(\dot{\gamma}_t\gamma_t^{-1})=\Hi_+((\dot{\gamma}_t\gamma_t^{-1})_{\cen})=\Hi_+(\dot{\alpha}_t\alpha_t^{-1})$$
for $t\in [a,b]$.
Assume that 
$$\theta_+(\gamma_b)=\int_a^b\phi_+(\dot{\gamma}_t\gamma_t^{-1})dt,$$
then 
\begin{align}\label{eq0}
\theta_+(\alpha_b)=\theta_+(\gamma_b)-\frac{1}{2}S_b=\int_a^b(\phi_+(\dot{\gamma}_t\gamma_t^{-1})-\phi_-(\dot{\gamma}_t\gamma_t^{-1}))dt=\int_a^b\|\dot{\alpha}_t\alpha_t^{-1}\|dt.
\end{align}
From Lemma \ref{lrlog} it follows that
$$\bigcap_{t\in [a,b]}\Hi_+(\dot{\gamma}_t\gamma_t^{-1})\subseteq\Hi_+(\gamma_b)\quad\mbox{ and }\quad\bigcap_{t\in [a,b]}\Hi_+(\gamma_t^{-1}\dot{\gamma}_t)\subseteq\Hi_+(\gamma_b).$$

The first proof of the opposite inclusions is as follows. Let $\xi\in\Hi_+(\alpha_b)\cap\s$ and define $\rho_\xi:U\to\s$ as before. Note that $\rho_\xi(\alpha)$ is a curve in $\s$ joining $\alpha_a\xi=\xi$ and $\alpha_b\xi=e^{i\theta_+(\alpha_b)}\xi$. Since $\theta_+(\alpha_b)<\pi$ we have
\begin{align}\label{eq2}
\theta_+(\alpha_b)=d_{\s}(\xi,e^{i\theta_+(\alpha_b)}\xi)\leq \Le(\rho_\xi(\alpha))\leq \Le(\alpha)= \int_a^b\|\dot{\alpha}_t\alpha_t^{-1}\|dt=\theta_+(\alpha_b),
\end{align}
where the first inequality follows from the definition of $d_{\s}$ and the second inequality follows from the length decreasing property of Lemma \ref{decreaselength}. Hence $\Le(\rho_\xi(\alpha))=\Le(\alpha)$, that is 
$$\int_a^b\|\dot{\alpha}_t\xi\|dt=\int_a^b\|\dot{\alpha}_t\alpha_t^{-1}\|dt=\int_a^b\|\dot{\alpha}_t\|dt.$$
Since $\|\dot{\alpha}_t\xi\|\leq \|\dot{\alpha}_t\|$ for all $t\in [a,b]$ this equality of lengths of curves implies that
\begin{align}\label{eq1}
\|\dot{\alpha}_t\xi\|= \|\dot{\alpha}_t\|=\|\dot{\alpha}_t\alpha_t^{-1}\|
\end{align}
for all $t\in [a,b]$. Equation (\ref{eq2}) states that $d_{\s}(\xi,e^{i\theta_+(\alpha_b)}\xi)= \Le(\rho_\xi(\alpha))$, and since $\theta_+(\alpha_b)<\pi$ we conclude that $\rho_\xià(\alpha)$ is a geodesic in $\s$ joining $\xi$ and $e^{i\theta_+(\alpha_b)}\xi$. Hence
$$\rho_\xi(\alpha_t)=\alpha_t\xi=e^{if(t)}\xi$$
for a non decreasing piecewise $C^1$ function $f:[a,b]\to\R$ such that $f(a)=0$ and $f(b)=\theta_+(\alpha_b)$. Observe that 
$$\dot{\alpha}_t\xi=if'(t)e^{if(t)}\xi \quad\mbox{  and  }\quad\xi=e^{if(t)}\alpha^{-1}_t\xi$$
for all $t\in [a,b]$, therefore
$$-i \dot{\alpha}_t\alpha_t^{-1}\xi=f'(t)\xi.$$
By equation (\ref{eq1}) 
$$\|\dot{\alpha}_t\xi\|=f'(t)=\|\dot{\alpha}_t\alpha_t^{-1}\|$$
for all $t\in [a,b]$, hence $\xi\in\Hi_+(\dot{\alpha}_t\alpha_t^{-1})$ for all $t\in [a,b]$. By Lemma \ref{lrlog} the same assertion holds for the left logarithmic derivatives. The proof for the minimum of the spectrum is similar and we omit it.  

We now turn to the second proof of the last part of the theorem.  Since
$$\theta_+(\alpha_b)=\int_a^b\|\dot{\alpha}_t\alpha_t^{-1}\|dt=\int_a^b\|\alpha_t^{-1}\dot{\alpha}_t\|dt=\Le(\alpha)\geq d(\id,\alpha_b)$$
by equation (\ref{eq0}) and $\theta_+(\alpha_b)\leq d(\id,\alpha_b)$ we see that $\Le(\alpha)= d(\id,\alpha_b)$. This means that $\alpha$ is a geodesic joining $\id$ and $\alpha_b$. If $\xi\in\Hi_+(\alpha_b)\cap\s$ we define a unit norm functional $\psi_\xi:i\B(\Hi)_h\to \R$ on the Lie algebra $i\B(\Hi)_h$ of $U$ as
$$\psi_\xi(z)=\langle -iz\xi,\xi\rangle$$
for $z\in i\B(\Hi)_h$. Note that $\psi_\xi$ is a norming functional of $\log(\alpha_b)$, and since $\alpha$ is a short curve Theorem \ref{normingfunc} asserts that 
$$\psi_\xi(\alpha_t^{-1}\dot{\alpha}_t)=\langle -i\alpha_t^{-1}\dot{\alpha}_t\xi,\xi\rangle=\|\alpha_t^{-1}\dot{\alpha}_t\|$$
for all $t\in [a,b]$. Therefore the Cauchy-Schwartz inequality for $\xi$ and $ -i\alpha_t^{-1}\dot{\alpha}_t\xi$ is an equality, so there is a $\lambda\in\C$ such that 
$$ -i\alpha_t^{-1}\dot{\alpha}_t\xi=\lambda\xi$$
for all $t$. Hence
$$\langle \lambda\xi,\xi\rangle=\lambda=\|\alpha_t^{-1}\dot{\alpha}_t\|$$
for all $t$, and we conclude that $\xi\in\Hi_+(\alpha_t^{-1}\dot{\alpha}_t)$ for all $t\in [a,b]$. As in the first proof, Lemma \ref{lempspec} is used to prove that $\xi\in\Hi_+(\dot{\alpha}_t\alpha_t^{-1})$ for all $t\in [a,b]$.

\end{proof}

The next corollary of Theorem \ref{mainthm} is a generalization of Theorem \ref{ineqspec}, it can be proved by adapting the proof of this theorem and using induction. 

\begin{cor}
Let $u_1,\dots,u_n$ be unitary operators acting on the Hilbert space $\Hi$ such that $-1\notin\spec(u_j)$ for $j=1,\dots,n$, and such that $\sum_{j=1}^n\theta_+(u_j)<\pi$ and $\sum_{j=1}^n\theta_-(u_j)>-\pi$. Denote by $u=u_1\cdots u_n$ the product of these operators. Then $-1\notin\spec(u)$,
$$\sum_{j=1}^n\theta_+(u_j)\geq\theta_+(u)\quad\mbox{ and }\quad\sum_{j=1}^n\theta_-(u_j)\leq\theta_-(u).$$
If $\theta_+(u)=\sum_{j=1}^n\theta_+(u_j)<\pi$, then
$$\bigcap_{j=1}^n\Hi_+(u_j)=\Hi_+(u),$$
and if $\theta_-(u)=\sum_{j=1}^n\theta_-(u_j)>-\pi$, then
$$\bigcap_{j=1}^n\Hi_-(u_j)=\Hi_-(u).$$
\end{cor}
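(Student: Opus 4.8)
The plan is to argue by induction on the number $n$ of factors, using Theorem \ref{ineqspec} as a black box for the two-factor case. The case $n=1$ is trivial and the case $n=2$ is Theorem \ref{ineqspec}, so I assume $n\geq 3$ and that the statement holds for any product of $n-1$ unitaries. I set $w=u_1\cdots u_{n-1}$, so that $u=wu_n$, and apply the $n-1$ case to $w$ together with Theorem \ref{ineqspec} to the pair $w,u_n$.

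First I would check the hypotheses of Theorem \ref{ineqspec} for $w$ and $u_n$. By the induction hypothesis $-1\notin\spec(w)$, $\theta_+(w)\leq\sum_{j=1}^{n-1}\theta_+(u_j)$ and $\theta_-(w)\geq\sum_{j=1}^{n-1}\theta_-(u_j)$; since $-1\notin\spec(u_n)$ and $\sum_{j=1}^n\theta_+(u_j)<\pi$, $\sum_{j=1}^n\theta_-(u_j)>-\pi$ by assumption, this forces $\theta_+(w)+\theta_+(u_n)<\pi$ and $\theta_-(w)+\theta_-(u_n)>-\pi$. Theorem \ref{ineqspec} then gives $-1\notin\spec(u)$ together with
$$\theta_+(u)\leq\theta_+(w)+\theta_+(u_n)\leq\sum_{j=1}^n\theta_+(u_j),\qquad\theta_-(u)\geq\theta_-(w)+\theta_-(u_n)\geq\sum_{j=1}^n\theta_-(u_j),$$
which is the first assertion.

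For the equality statements, suppose $\theta_+(u)=\sum_{j=1}^n\theta_+(u_j)$. Then the chain $\theta_+(u)\leq\theta_+(w)+\theta_+(u_n)\leq\sum_{j=1}^{n-1}\theta_+(u_j)+\theta_+(u_n)=\theta_+(u)$ collapses, so $\theta_+(w)=\sum_{j=1}^{n-1}\theta_+(u_j)$ and $\theta_+(wu_n)=\theta_+(w)+\theta_+(u_n)=\theta_+(u)<\pi$. The equality part of Theorem \ref{ineqspec} applied to $w,u_n$ gives $\Hi_+(u)=\Hi_+(w)\cap\Hi_+(u_n)$, while the equality part of the induction hypothesis gives $\Hi_+(w)=\bigcap_{j=1}^{n-1}\Hi_+(u_j)$; intersecting the two yields $\Hi_+(u)=\bigcap_{j=1}^n\Hi_+(u_j)$. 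The minimum-argument case is identical, using the $\theta_-$ and $\Hi_-$ parts of Theorem \ref{ineqspec} and of the induction hypothesis.

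A second, induction-free route applies Theorem \ref{mainthm} to the piecewise $C^1$ curve $\gamma\colon[0,n]\to U$ that on $[j-1,j]$ equals $t\mapsto u_1\cdots u_{j-1}\,e^{(t-j+1)\log(u_j)}$; then $\gamma_0=\id$, $\gamma_n=u$, on the $j$-th piece $\dot\gamma_t\gamma_t^{-1}=(u_1\cdots u_{j-1})\log(u_j)(u_1\cdots u_{j-1})^{-1}$, so $\phi_\pm(\dot\gamma_t\gamma_t^{-1})=\theta_\pm(u_j)$ and $\Hi_\pm(\dot\gamma_t\gamma_t^{-1})=(u_1\cdots u_{j-1})\Hi_\pm(u_j)$, whence $\int_0^n\phi_\pm(\dot\gamma_t\gamma_t^{-1})\,dt=\sum_{j=1}^n\theta_\pm(u_j)$. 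Theorem \ref{mainthm} then produces the inequalities and, in the equality case, identifies $\Hi_+(u)$ with $\bigcap_{j=1}^n(u_1\cdots u_{j-1})\Hi_+(u_j)$. The one point that needs a genuine, if brief, argument in this form is the unconditional set identity $\bigcap_{j=1}^n(u_1\cdots u_{j-1})\Hi_+(u_j)=\bigcap_{j=1}^n\Hi_+(u_j)$: if $\xi$ lies in every factor on the left, an induction on $j$ shows that $(u_1\cdots u_{j-1})^{-1}\xi$ is a unimodular multiple of $\xi$, which upgrades $(u_1\cdots u_{j-1})^{-1}\xi\in\Hi_+(u_j)$ to $u_j\xi=e^{i\theta_+(u_j)}\xi$, and the reverse inclusion is the same computation run backwards. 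Beyond this everything is bookkeeping with the hypotheses, so in the inductive approach I do not expect any real obstacle.
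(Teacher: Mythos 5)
Your primary (inductive) route has a genuine gap: you invoke the induction hypothesis for $w=u_1\cdots u_{n-1}$ without verifying its hypotheses, namely $\sum_{j=1}^{n-1}\theta_+(u_j)<\pi$ and $\sum_{j=1}^{n-1}\theta_-(u_j)>-\pi$, and these do \emph{not} follow from the hypotheses for $n$ factors, because $\theta_+(u_n)$ may be negative (and $\theta_-(u_n)$ positive). Concretely, take $u_1=u_2=e^{i\pi/2}\id$ and $u_3=e^{-i\pi/2}\id$: then $\sum_{j=1}^{3}\theta_+(u_j)=\pi/2<\pi$ and $\sum_{j=1}^{3}\theta_-(u_j)=\pi/2>-\pi$, so the corollary's hypotheses hold, yet $w=u_1u_2=-\id$, so $-1\in\spec(w)$, $\theta_\pm(w)$ is not even defined, and neither the induction hypothesis nor Theorem \ref{ineqspec} can be applied to the pair $(w,u_n)$. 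Hence the sentence ``this forces $\theta_+(w)+\theta_+(u_n)<\pi$ and $\theta_-(w)+\theta_-(u_n)>-\pi$'' is unjustified; what it ignores is precisely the possible wrap-around of partial products. An induction can be made to work, but only by adapting the centring argument used in the proof of Theorem \ref{ineqspec} (replacing each factor by $e^{-\frac{i}{2}(\theta_+ +\theta_-)}u_j$ and keeping track of the accumulated phase), not by quoting the two-factor statement as a black box.

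Your second, induction-free route is correct and is essentially the paper's proof: the paper takes exactly this piecewise curve, prescribed by the constant left logarithmic derivative $\gamma_t^{-1}\dot{\gamma}_t=\log(u_j)$ on $(j-1,j)$, and applies Theorem \ref{mainthm}. The only difference is that you extract the equality case from the right-logarithmic-derivative intersection $\bigcap_j (u_1\cdots u_{j-1})\Hi_+(u_j)$ and then prove the (correct) identity with $\bigcap_j\Hi_+(u_j)$; this extra step, while sound, is unnecessary, since Theorem \ref{mainthm} also yields $\Hi_+(\gamma_b)=\bigcap_{t}\Hi_+(\gamma_t^{-1}\dot{\gamma}_t)$, and for this curve $\gamma_t^{-1}\dot{\gamma}_t=\log(u_j)$ gives $\bigcap_j\Hi_+(u_j)$ directly. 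So the proposal stands only through this second argument; the inductive paragraph should be removed or rebuilt along the centring lines indicated above.
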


\begin{proof}
Let $x_1,\dots,x_n$ be skew-hermitian operators such that $e^{x_j}=u_j$ and $\|x_j\|<\pi$ for $j=1,\dots,n$, and let $\gamma:[0,n]\to U$ be a piecewise $C^1$ curve such that $\gamma_0=\id$ and $\gamma_t^{-1}\dot{\gamma}_t=x_j$ for $t\in (j-1,j)$ and $j=1,\dots,n$. Note that $\gamma_n=u$, $\Hi_+(u_j)=\Hi_+(x_j)$ and $\Hi_-(u_j)=\Hi_-(x_j)$ for $j=1,\dots,n$. It is easy to see that the assertions of the corollary are a special case of Theorem \ref{mainthm}.

\end{proof}

%\section*{Acknowledgements}
%We are grateful to an anonymous referee for his/her careful reading of the manuscript and valuable suggestions that improved the presentation of the article.

%\begin{bibdiv}
%\begin{biblist}

%\bib{kit}{article}{author={W. Audeh}, author={F. Kittaneh}, title={Singular value %inequalities for compact operators}, journal={Linear Algebra Appl.}, number={437}, %date={2012}, pages={no. 10, 2516--2522}}

%\bib{banach}{book}{author={S. Banach}, title={Th\'eorie des op\'erations lin\'eaires}, %series={Éditions Jacques Gabay, Sceaux}, date={1993}}

%\end{biblist}
%\end{bibdiv}

\noindent
\end{document}